\def\A{\mathfrak{A}}
\def\alg{\hbox{\rm alg}}
\def\:{\colon}
\newtheorem{Th}{Theorem}
\newtheorem{Le}{Lemma}
\title{ON CONMUTATIVE LEFT-NILALGEBRAS OF
INDEX 4\footnote{ Partially supported by FAPESP 05/01790-9, Brasil}}
\author{ \it JUAN C. GUTIERREZ FERNANDEZ\\ \it UNIVERSIDAD DE SÃO PAULO, BRAZIL}
\date{\it 2018}
\begin{document}
\maketitle
{\small 

\begin{abstract} \it 
We first present a solution to  a conjecture of \cite{CHL} in the positive.  
 We prove that if $\A$ is a commutative nonassociative algebra  over a field of characteristic $\ne 2,3$,
satisfying the identity  $x(x(xx))=0$, then 
 $L_{a^{t_1}}L_{a^{t_2}}\cdots L_{a^{t_s}} \equiv  0$ if $ t_1+t_2+\cdots +
 t_s \geq 10$, where $a\in \A$. \rm 
 \end{abstract}
 \vspace{1cm}

 \vspace{1cm}
  
  \noindent\bf Keywords: \rm solvable; commutative; nilalgebra;
  
  \noindent\bf  Mathematics Subject Classification: \rm 17A05, 17A30}

\newpage
 
 \section{Introduction}

Throughout this paper  the term \it algebra \rm is understood to be a 
commutative not necessarily associative algebra. We will use the notations
 and terminology of \cite{F}. 
Let $\A$ be an (commutative nonassociative) algebra over a field $F$. We define inductively 
the following powers, $\A^1 = \A$ and $\A^{s} = \sum_{i+j = s} \A^i \A^j$ 
for all positive integers $s\geq 2$. We shall say that $\mathfrak{A}$ is 
\it nilpotent \rm in case there is an integer $s$ such that $\A^s = (0)$.
The algebra  $\mathfrak{A}$ is called  \it
nilalgebra \rm in case 
the subalgebra $\alg(a)$ of $\A$ generated by $a$ is nilpotent, for all $a\in \A$. Therefore
$\A$ is nilalgebra if and only if  for every $a\in \A$ there exists an integer $t$ such
that every product of at least $t$ factors each of them equal to $a$, in
whatever association, vanishes. The \it (principal) powers \rm of an element $a$ in $\A$ are defined recursively by 
 $a^1 = a$ and $a^{i+1} = a a^i$ for all  integers $i\geq 1$. The algebra $\mathfrak{A}$ is called 
  \it right-nilalgebra \rm if for every $a$ in $\mathfrak{A}$ there exists an integer $k=k(a)$ such that 
  $a^k =0$. The smallest positive integer $k$ which this property is the \it index\rm. 
   Obviously, every nilalgebra is  right-nilalgebra.  For any element $a$ in $\mathfrak{A}$, the linear mapping  
   $L_a$ of $\mathfrak{A}$  defined  by $x\rightarrow ax$ is called \it multiplication  operator \rm of 
   $\mathfrak{A}$. An \it Engel algebra \rm is an algebra in which  every  multiplication operator is nilpotent in the sense that for every $a\in\mathfrak{A}$ there exists a positive integer $j$ such that $L_a^j = 0$.

An important  question is that of the existence of simple nilalgebras
in the class of finite-dimensional  algebras.
In \cite{F} we proved that every nilagebra $\A$ of dimension $\leq 6$ over a
field of characteristic $\ne 2,3,5$ is solvable and hence $\A^2 \varsubsetneqq
\A$. For power-associative nilalgebras of dimension $\leq 8$ over a field of
characteristic $\ne 2,3,5$, we have shown  in \cite{FS} that they are
solvable, and hence there is no simple algebra  in this subclass. See also 
\cite{ES} and \cite{F}   for power-associative nilalgebras of
dimension $\leq 7$.

We show now the  process of linearization of identities, which  
is an important tool in the theory of varieties of algebras. See 
\cite{G}, \cite{O} and  \cite{Z} for more information.
 Let $P$ be the free commutative nonassociative
 polynomial ring in two generators $x$ and $y$ over a field $F$.
For every $\alpha_1,\ldots,\alpha_r\in P$, the \it operator linearization \rm 
 $\delta[
\alpha_1,\ldots,\alpha_r]$ can be defined as follows: if $p(x,y)$ is a monomial
in $P$, then $\delta[\alpha_1,\ldots,\alpha_r]p(x,y)$ is obtained by making
 all the possible replacements 
of $r$ of the $k$ identical arguments $x$ by $\alpha_1,\ldots,\alpha_r$ and
summing the resulting terms if  $x-$degree of $p(x,y)$ is $\geq r$,
and is equal to zero in other cases. Some examples of this operator are
\begin{eqnarray*}[ll]
\delta[y](x^2(xy)) &= 2 (xy)^2+ x^2y^2\\
\delta[x^2,y](x^2) &= 2 x^2y, \quad \delta[y,xy^2,x](x^2) = 0.
\end{eqnarray*}
For simplicity, $\delta[\alpha:r]$ will
 denote $\delta[\alpha_1,\ldots,\alpha_r]$, where $\alpha_1 = \cdots =
 \alpha_r = \alpha$. We observe that if $p(x)$ is a polynomial in $P$,
  then $p(x+y) = p(x) + \sum_{j=1}^\infty \delta[y:j] p(x)$, where $\delta[y:j] p(x)$ is the sum of all the 
  terms of $p(x+y)$ which have degree $j$ with respect to $y$.

\begin{Le}[\cite{Z}]
Let $p(x,y)$ be a commutative nonassociative polynomial of  $x$-degree  $\leq n$. If $F$ is a field of
 characteristic either zero or  $\geq n$, and the $F$-algebra $\A$
  satisfies the identity $p(x,y)$, then $A$ satisfies all linearizations of $p(x,y)$. 
\end{Le}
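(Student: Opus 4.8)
\emph{Proof proposal.} The plan is to deduce every linearization from the single expansion of $p$ after a generic additive shift of the argument $x$, peeling off homogeneous components one degree at a time. Let $\A$ be an $F$-algebra satisfying $p(x,y)\equiv 0$, let $d\le n$ be the $x$-degree of $p$, and introduce a fresh variable $z$. The remark recorded just before the statement gives the \emph{finite} expansion
\[
p(x+z,y)=p(x,y)+\sum_{j=1}^{d}\bigl(\delta[z:j]p\bigr)(x,z,y),
\]
the sum being finite because $\delta[z:j]p=0$ for $j>d$. Since $\A$ satisfies $p$, it satisfies $p(x,y)\equiv 0$ and, substituting $x\mapsto x+z$, also $p(x+z,y)\equiv 0$; hence $\sum_{j=1}^{d}\delta[z:j]p\equiv 0$ on $\A$. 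Replacing $z$ by $\lambda z$ with $\lambda\in F$ and using that $\delta[z:j]p$ is homogeneous of degree $j$ in $z$ gives $\sum_{j=1}^{d}\lambda^{j}v_{j}\equiv 0$ on $\A$ for every $\lambda\in F$, where $v_{j}:=\delta[z:j]p$.

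Next I would separate the $v_{j}$. Evaluating the last relation at $d$ distinct nonzero scalars yields a Vandermonde linear system for $v_{1},\dots,v_{d}$; its matrix is invertible provided $F$ contains enough scalars, which is exactly what the hypothesis $\mathrm{char}\,F=0$ or $\ge n$ supplies (and if $F$ happens to be too small one may first replace $\A$ by $\A\otimes_{F}K$ for an infinite extension $K\supseteq F$, which satisfies the same identities). Hence $\delta[z:j]p\equiv 0$ on $\A$ for $j=1,\dots,d$; in particular the first polarization $\delta[z_{1}]p:=\delta[z:1]p$ is an identity of $\A$ of $x$-degree at most $d-1$. Feeding it back into the same procedure with a second fresh variable $z_{2}$, then a third, and so on, produces identities $\delta[z_{2}]\delta[z_{1}]p\equiv 0$, $\dots$; the combinatorial point to check is that these single-variable polarizations compose as expected — $\delta[z_{2}]\circ\delta[z_{1}]=\delta[z_{1},z_{2}]$ and, after $r$ steps, the operator $\delta[z_{1},\dots,z_{r}]$ of the definition — so that $\delta[z_{1},\dots,z_{r}]p\equiv 0$ on $\A$ for every $r\le d$ and pairwise distinct fresh variables $z_{1},\dots,z_{r}$.

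Finally, an arbitrary linearization $\delta[\alpha_{1},\dots,\alpha_{r}]p$ with $\alpha_{1},\dots,\alpha_{r}\in P$ is the image of $\delta[z_{1},\dots,z_{r}]p$ under the substitution $z_{i}\mapsto\alpha_{i}$; since the identities satisfied by $\A$ are closed under substitution, $\A$ satisfies $\delta[\alpha_{1},\dots,\alpha_{r}]p\equiv 0$, which is the assertion. I expect the genuine work to be in the middle step: verifying that iterated single-variable polarizations reassemble exactly into the multi-argument operators $\delta[z_{1},\dots,z_{r}]$ as combinatorially defined, and keeping track of the integer multiplicities that appear along the way so that one sees precisely where ``$\mathrm{char}\,F=0$ or $\ge n$'' is needed. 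A cleaner alternative that avoids the iteration is to work over $F[\lambda_{1},\dots,\lambda_{r}]$, substitute $x\mapsto x+\lambda_{1}z_{1}+\dots+\lambda_{r}z_{r}$ into $p$, and read off the coefficient of $\lambda_{1}\cdots\lambda_{r}$ — but one must still identify that coefficient with $\delta[z_{1},\dots,z_{r}]p$.
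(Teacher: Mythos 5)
The paper does not actually prove this lemma: it is imported from \cite{Z} without argument, so your proposal can only be measured against the standard textbook proof, and in outline it coincides with it --- expand $p(x+z,y)$ into $z$-homogeneous pieces, separate them with a Vandermonde system, check that single-variable polarizations compose to the multi-argument operator, and specialize $z_i\mapsto\alpha_i$ at the end. The composition worry you raise is harmless: on a monomial of $x$-degree $k$ both $\delta[z_2]\delta[z_1]$ and $\delta[z_1,z_2]$ produce exactly the $k(k-1)$ terms indexed by ordered pairs of distinct $x$-positions, and both annihilate monomials of $x$-degree $\le 1$; the closing substitution step is likewise fine.

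The genuine gap sits exactly where you said you wanted to see the hypothesis used. The relation $\sum_{j=1}^{d}\lambda^{j}v_{j}\equiv 0$ carries information only for $\lambda\ne 0$, so inverting your system requires $d$ distinct \emph{nonzero} scalars, i.e.\ $|F|\ge d+1$; but ``characteristic $0$ or $\ge n$'' only guarantees $|F|\ge n\ge d$, which in the boundary case $F=\mathbb{F}_{q}$ with $q=n=d$ leaves you one scalar short. The patch you offer --- extend scalars to an infinite field, ``which satisfies the same identities'' --- is false for non-multilinear identities of a fixed algebra: $x^{q}-x$ vanishes identically on the one-dimensional algebra $\mathbb{F}_{q}$ but not on $\overline{\mathbb{F}}_{q}$; worse, $\delta[y](x^{q}-x)$ evaluates to $-y$ on $\mathbb{F}_{q}$, so for non-homogeneous $p$ the boundary case is not merely hard to reach but genuinely fails, and no patch can exist. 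The standard repair, missing from your write-up, is to treat the top component separately: when $p$ is homogeneous of $x$-degree $d$, the component $v_{d}=\delta[z:d]p=p(z,y)$ is already an identity because it is a substitution instance of $p$, so the Vandermonde system involves only the $d-1$ unknowns $v_{1},\dots,v_{d-1}$ and $d-1$ distinct nonzero scalars suffice --- exactly what $|F|\ge n$ provides. (Every identity linearized in this paper is homogeneous, and the excluded characteristics always make $|F|$ strictly larger than the degree, so nothing downstream is affected; but your proof as written should either add the homogeneity reduction or strengthen the scalar count.)
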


\section{Right-nilalgebras of index 4} 
Throughout this section  $F$ is a field of characteristic
 different from 2 or 3  and all the algebras are over $F$.
 We will study  right-nilalgebras of index $\leq 4$, that is 
 the variety $\mathcal{V}$ of  algebras over the
field $F$ satisfying  the identity
\begin{equation}\label{ident}
 x^4=0.
\end{equation}
Let $\A$ be an algebra in $\mathcal{V}$. For simplicity, we will denote by
$L$ and $U$ the multiplication operators, $L_x$ and $L_{x^2}$ respectively,
where $x$ is an element in $\A$. The following known result  is a basic tool in our investigation. See \cite{CHL} and \cite{EL}.
\begin{Le} \label{Le3} Let $\A$ be a commutative right-nilalgebra of index 4. Then $\A$ satisfies the identities
\begin{equation}\label{eqx}
x²x³ = - x(x²x²), \quad x³x³ = (x²)^3=x(x(x²x²)),
\end{equation}
and $p(x) = 0$, for every monomial $p(x)$ with $x$-degree $\geq 7$. 
Furthermore,  we have 
\begin{eqnarray}[lcl]
L_{x³} &=& -LU -2 L^3,\label{eql3}\\
L_{x²x^2} &=& -U² - 2 UL^2 -2LUL + 4 L^4,\\
L_{x(x²x²)} &=& - LU² - 2 LUL²-2L²UL - 4 L^3U - 12 L^5,\label{eql5}\\
L_{x(x(x²x²))} &=& 2 L²U² + 4 L²UL² + 4 L^4U + 8 L^6,\label{eql6}
\end{eqnarray} 
and also 

\begin{center}{\footnotesize
{\rm Table i: Multiplication identities of degree 5}\\
\begin{array}{l|ccccccc}\hline
&ULU&LU^2&UL^3&LUL^2&L^2UL&L^3U&L^5\\
\hline
U^2L& 0 & -1 & 2& 0 & 0 & -2 & -8\label{u2l}\\
\hline
\end{array}
}
\end{center}
\noindent and two identities of $x$-degree 6  which may be
written as  

\begin{center}
{\footnotesize
{\rm Table ii: Multiplication identities of degree 6}
\begin{array}{l|ccccccccc}\hline
&  UL^2U  &  (LU)^2  &  L^2U^2 & UL^4 & LUL^3 & L^2UL^2 & L^3UL & L^4U & L^6 \\
\hline
 U^3 &-2&-2&2&-8&-8&0&-4&8&40\label{u3}\\ 
 (UL)^2 &-1&-1&1&-4&-2&2&0&4&24\label{ulul}\\ \hline
\end{array}
}
\end{center}
\end{Le}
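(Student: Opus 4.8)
The plan is to derive every assertion of the Lemma from the single identity $x^4=0$ by systematic polarization. Since the characteristic is $\neq 2,3$, it is either $0$ or at least $5$, so in all cases the hypothesis (characteristic $0$ or $\geq 4=\deg(x^4)$) of the linearization Lemma of \cite{Z} is met, and $\A$ satisfies \emph{all} linearizations of $x^4=0$. In particular $\delta[y:j](x^4)=0$ holds identically for every $j$, and — because these are genuine polynomial identities — one may afterwards substitute principal powers of $x$ for the auxiliary variables. The engine of the proof is the first polarization
\[
\delta[y](x^4)=x^3y+x(x^2y)+2\,x(x(xy))=0,
\]
valid for all $x,y$. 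Reading this as a linear operator acting on the generic vector $y$ gives at once $L_{x^3}+LU+2L^3=0$, which is \eqref{eql3}; substituting $y=x^2$ and using $x^4=0$ yields $x^2x^3=-x(x^2x^2)$, and substituting $y=x^3$ together with the relation just obtained yields $x^3x^3=x(x(x^2x^2))$, after which a parallel substitution identifies $(x^2)^3=x^2(x^2x^2)$ with the same element. This is \eqref{eqx}.

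For the four operator identities \eqref{eql3}, the one for $L_{x^2x^2}$, \eqref{eql5} and \eqref{eql6}, I would run one uniform procedure: polarize the master identity once more in the generic variable $x$, forming the second polarization $\delta_x[z]$ of $\delta[y](x^4)=0$, which is linear in each of $z$ and $y$ and quadratic in $x$; then substitute $z=x^2$ (resp.\ $z=x^3$). Every product of $x$ with a power collapses through $x^4=0$ and $L_{x^k}=0$ for $k\geq4$, while the surviving $L_{x^3}$ is removed by \eqref{eql3}. With $z=x^2$ this produces $L_{x^2x^2}=-U^2-2UL^2-2LUL+4L^4$, and with $z=x^3$ it produces, after rewriting $x(x^2x^2)=-x^2x^3$ by \eqref{eqx}, exactly \eqref{eql5}; one further polarization, using $x(x(x^2x^2))=x^3x^3$, gives \eqref{eql6}. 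The essential bookkeeping point is to normal-order each output as a word in the two generators $L$ and $U$, so that it is directly comparable with the stated right-hand side; all the coefficients that arise are invertible because the characteristic is $\neq 2,3$.

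The two tables are the same computation pushed one and two polarization orders higher. Substituting $z=x^2$ into the second polarization $\delta_x[z:2]$ of the master identity gives a relation among the degree-$5$ words carrying two copies of $U$ (namely $U^2L,\ ULU,\ LU^2$) together with the lower words produced when internal factors $x\cdot x^2=x^3$ are reduced by \eqref{eql3}; solving for $U^2L$ yields the degree-$5$ identity of Table~i. Likewise the third polarization $\delta_x[z:3]$ with $z=x^2$, supplemented by a second computation that equates the two available expressions for $L_{x^3x^3}=L_{x(x(x^2x^2))}$ (their equality guaranteed by \eqref{eqx}), yields the two degree-$6$ relations for $U^3$ and $(UL)^2$ of Table~ii.

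Finally, the vanishing of every monomial $p(x)$ of $x$-degree $\geq7$ follows by strong induction on the degree. Writing $p(x)=p_1(x)\,p_2(x)=L_{p_1}(p_2)$ with $\deg p_1\leq\deg p_2$, any factor of degree $\geq7$ is already $0$ by induction, so it remains to treat the finitely many configurations in which both factors have degree $\leq6$ and the total degree lies between $7$ and $12$. For these, each degree-$\leq6$ monomial $p_1$ has a left multiplication $L_{p_1}$ expressible as a word in $L$ and $U$ — precisely the content of \eqref{eql3}, \eqref{eql5}, \eqref{eql6} together with \eqref{eqx} — and applying these words to the degree-$\leq6$ elements $p_2$ and collapsing through Tables~i--ii drives the result to $0$. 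The main obstacle is not any single identity but the sheer volume and internal consistency of the polarization bookkeeping: one must ensure that the reductions of $L_{x^k}$ and of the mixed $U,L$-words are carried out coherently, since it is exactly the agreement of two distinct reduction paths (for instance the two computations of $L_{x^3x^3}$) that produces the non-obvious relations in the degree-$6$ table and thereby closes the induction.
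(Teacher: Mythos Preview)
The paper does not prove this lemma: it simply records it as ``the following known result'' and refers to \cite{CHL} and \cite{EL}. So there is no paper proof to compare against. Your outline is the standard linearization argument and is exactly the machinery the paper itself uses immediately afterwards to produce Tables~iii--v, so the spirit is right and the derivations of \eqref{eql3}--\eqref{eql6} and of Table~i via $\delta[y],\,\delta[y,x^2],\,\delta[y,x^3],\,\delta[y,x^2,x^2]$ of $x^4$ are correct.

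Two places in your sketch are looser than they should be. First, for Table~ii you will not get both rows from the single recipe ``$\delta_x[z{:}3]$ with $z=x^2$ plus equating the two forms of $L_{x^3x^3}$''; in practice one of the rows comes from a mixed linearization such as $\delta[y,x,x^2,x^3](x^4)=0$ (or, equivalently, from multiplying the degree-$5$ identity on the left by $U$ and reducing), and one needs the first row already in hand to simplify the second. Second, and more importantly, your induction for the vanishing of all degree~$\geq 7$ monomials cannot be closed by ``collapsing through Tables~i--ii'': those are \emph{operator} identities in $L,U$ acting on a generic $y$, and applying them only rewrites one degree-$7$ element as a combination of others. What actually kills degree~$7$ is a small number of \emph{element} identities obtained by substituting powers of $x$ for \emph{all} free variables, e.g.\ $\delta[x^3,x^2](x^4)=0$, $\delta[x^2,x^2,x^2](x^4)=0$ and $\delta[x^2x^2](x^4)=0$; together with \eqref{eqx} these show that the three distinct degree-$7$ monomials $x\!\cdot\! x(x(x^2x^2))$, $x^2\!\cdot\! x(x^2x^2)$, $x^3\!\cdot\!(x^2x^2)$ are all equal and that four times their common value is zero. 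Once degree~$7$ is gone, your inductive step (expressing $L_{p_1}$ as a word in $L,U$ via \eqref{eql3}--\eqref{eql6} and observing that each letter applied to a degree-$\leq 6$ element eventually produces a degree-$7$ subexpression) does go through for degrees $\geq 8$.
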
 
We note that, for example,  Table i means that $U^2L = - LU^2 + 2 UL^3 - 2 L^3U
-8 L^5$. From the identities (\ref{eql3}-\ref{eql6}) we get that for any $a\in \A$ 
the associative algebra $\A_a$ generated by all  $L_c$ with $c\in \alg(a)$ is in
 fact generated by $L_a$ and $L_{a^2}$.   Furthermore,
 every algebra in $\mathcal{V}$ is  a nilalgebra of index $\leq 7$.

We now pass to  study  homogeneous identities in $\A$ with  $x$-degree $\geq 7$  and
$y$-degree 1. From the relation $0= \delta[y,x,x³,x³](x^4) = 2y(x(x^3x^3))+4y(x^3(xx^3))
+2x(y(x^3x^3))+4x^3(y(xx^3))+ 4x(x^3(yx^3))+ 4 x^3(x(yx^3))+ 4x^3(x^3(xy))
=2x((x^3)^2y)+4x(x^3(x^3y))+ 4 x^3(x(x^3y))+ 4x^3(x^3(xy))
=2[LL_{x^3x^3}+ 2LL_{x^3}L_{x^3}+ 2L_{x^3}LL_{x^3}+2L_{x^3}L_{x^3}L](y)$
 we have 
\begin{equation}
L^3U^2 = - 2 L^3UL² - L^4UL - 5 L^5U - 20 L^7,
\end{equation}
since we can use the reductions (\ref{eql3}-\ref{eql6}) and replace the
occurrences of  $(UL)^2$. Multiplying  the identity of Table i  by
$U$ from the left,  replacing first the occurrences of $U^3$ and
next using reductions from Table i, Table ii and above identity we get a new identity as
follows:
{\footnotesize\begin{eqnarray*}
0 &=&U^3L + U LU^2- 2U^2L^3 + 2UL^3U +8U L^5 = [-2U L^2UL - 2(LU )^2L +2L^2U^2L - 8U L^5\\
& &-8LU L^4- 4L^3U L^2 + 8L^4UL +40L^7] + U LU^2 - 2U^2L^3 + 2UL^3U +8U L^5\\
&=&-2UL^2UL + [2LU L^2U +2L(LU)^2 + [4L^3U L^2 +2L^4UL + 10L^5U +40L^7]\\
& &+8LU L^4 + 4L^2U L^3- 4L^3UL^2 -8L^5U +48L^7] + 2L^2U^2L - 8U L^5 -8LU L^4\\
& &-4L^3UL^2 + 8L^4U L+ 40L^7 + U LU^2 + [[-2L^2U 2L+ 4LU L^4 - 4L^4UL - 16L^7]\\
& &-4U L^5 + 4L^3UL^2 + 16L^7] + 2U L^3U + 8U L^5,
\end{eqnarray*}}
that is,
$$
ULU^2 = 2\Big(UL^2UL-UL^3U -LUL^2U-L^2ULU+\qquad\qquad 
$$
\begin{equation}\label{ulu2t}
\qquad \qquad\qquad\qquad\quad 2UL^5-2LUL^4-2L^2UL^3
-3L^4UL-L^5U-16L^7 \Big).
\end{equation}
Next, we can reduce the relation  $0= \delta[y,x,x²,x²x²]x^4$ using the above identities. This
yields 
\begin{equation} \label{ul5}
UL^5 = - LUL^4 + \frac{1}{2} L^2UL^3 + \frac{3}{4}L^4UL +\frac{3}{4} L^5U+ 8
L^7. 
\end{equation} 
Now combining (\ref{ulu2t}) and (\ref{ul5}) we obtain 
$
ULU^2 = 2 UL²UL -2 UL^3U -2 LUL^2U-2 L^2ULU -8 LUL^4-2 L^2UL^3-3L^4UL+ L^5U$.
Thus, we have three identities  
 of $x$-degree 7 and $y$-degree 1 which may be written as multiplication
 identities: 
\begin{center}{\footnotesize
Table iii: Multiplication identities of degree 7
\begin{array}{l|rrrrrrrrrr}\hline
&  UL^2UL  &  UL^3U  &  LUL²U  &  L(LU)^2  &  LUL^4 &  L²UL^3 &
 L^3UL² & L^4UL & L^5U & L^7 \\ \hline
 L^3U² & 0&0&0&0&0&0&-2&-1&-5&-20\\
 UL^5  & 0&0&0&0&-1&1/2&0&3/4&3/4&8\\
 ULU^2 &2&-2&-2&-2&-8&-2&0&-3&1&0\\ \hline
\end{array}
}
\end{center}
In an analogous way, using successively the identities
$$
0= \delta[y,x,x,x(x(x^2x²))]x^4,\quad
0= \delta[y,x²,x²,x^2x²]x^4,\quad 
0= \delta[y,x,x^2,x(x^2x²)]x^4,
$$
multiplying  the second  identity of Table ii  with the operator $U$ from the left and replacing the
occurrences of $UUL$, and finally using $0= \delta[y,x,x^3,x^2x²]x^4$,
we obtain  the following 5 multiplication identities:
\begin{center}{\footnotesize
Table iv: Multiplication identities of degree 8
\begin{array}{l|rrrrrrrrr}\hline
 &  UL^4U  &  LUL^2UL  &  LUL^3U  &  L^2UL²U  &  L^2UL^4  &  L^4UL^2  &
  L^5UL  &  L^6U  &  L^8 \\
\hline
 L^3ULU  & 0 & 0 & 0 & 0 & 0 &  -1/2  &  -2  &  -11/2  &  -20 \\
 UL²U^2  & -4 & -2& -2& 0 & 2& -5/2 & 13 & 31/2 & 32 \\  
 (UL²)^2  & 0 & 1 & 0 & -1 &-12 &-11/4&-7/2&25/4&36\\
 UL^3UL  &-1&-1&-1&0&-4&-11/2&-3&9/2&0\\
 L^3UL^3 &0&0&0&0&0&-3/4&-3/2&-3/4&-8\\ \hline
\end{array}
}
\end{center}
 Now, relations $0= \delta[y,x,x^3,x(x^2x^2)]x^4$, 
$0= \delta[y,x,x^2,x(x(x^2x^2))]x^4$, $0= \delta[y,x^2,x^2,$ $(x^2x^2)]x^4$,
$0= \delta[y,x, x^2x^2, x^2x^2]x^4$, $0= \delta[y,x^2, x^3,x^2x^2]x^4$, and
multiplying the relation determined by the last  row of Table iii with the
operator $U$ from the left and  first replacing the
occurrences of  $UUL$, imply the following 6
multiplication  identities:
\begin{center}{\footnotesize
Table v: Multiplication identities of degree 9\\
\begin{array}{l|rrrr}\hline
 & LUL^4U & (L^2U)^2L & L^7U & L^9\\
\hline
L^6UL     & 0 & 0 &-7 &- 48\\
L(L²U)^2  & 0 & 0 & - 217 &- 4510/3 \\
UL^4UL    & 1 & 0 & -587/2 & -6155/3\\
L^2UL^3U  & 0 & 0 & 29/3 & 422/9\\
UL^2ULU   & 0 & 0 & 1318/3 & 27988/9\\
L^5UL^2   & 0 & 0 & -23 & -496/3\\ \hline
\end{array}
}
\end{center}

The author used a MAPLE language program to discover these identities. 
We now present a solution of a Conjecture of \cite{CHL} in the positive. 
We see that for every $a\in \A$, the associative algebra $\A_a$, generated by 
the  multiplication operators $L_a$ and $L_{a^2}$,  is nilpotent of index $\leq 10$.

\begin{Th}\label{Th2}  Let $\A$ be an algebra over a field $F$ of characteristic
  $\ne 2,3$, satisfying $x^4 = 0$. Then every monomial in $P$ of
  $x$-degree $\geq 10$ and $y$-degree 1 is an identity in $\A$. In particular,
$L_a^{10} = 0$ for  all $a\in \A$. 
\end{Th}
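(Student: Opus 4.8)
The plan is to show that the associative algebra $\A_a$ generated by $L = L_a$ and $U = L_{a^2}$ is spanned, in each total degree $m = \alpha + 2\beta$ counting $L$ with weight $1$ and $U$ with weight $2$, by a controlled finite set of monomials, and that for $m \geq 10$ every such monomial reduces to zero. The starting point is Lemma~\ref{Le3}: equations (\ref{eql3}--\ref{eql6}) already express $L_{a^3}, L_{a^2a^2}, L_{a(a^2a^2)}, L_{a(a(a^2a^2))}$ as polynomials in $L$ and $U$, so $\A_a$ is indeed generated by $L$ and $U$; moreover every monomial $p(x)$ of $x$-degree $\geq 7$ vanishes in $\A$, so in the weighted grading $U$ itself "is" $L^2$ up to lower-order corrections only in a bounded range, and the genuinely new information is the list of reduction rules collected in Tables i--v. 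These tables give, for total weights $5,6,7,8,9$, expressions rewriting a distinguished "leading" monomial (one containing at least two $U$'s, or a high power of $U$) in terms of monomials with at most the allowed $U$-pattern. The first step of the proof is therefore to fix, for each weight $m$, a \emph{normal form}: a spanning set of monomials in $L, U$ that survive all the reductions of Tables i--v together with (\ref{eql3}--\ref{eql6}).

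Next I would argue by a descending induction on the number of $U$-factors and, within that, on how far to the right the $U$'s sit. Tables i and ii kill all monomials with $\geq 2$ consecutive-or-not $U$'s beyond weight $5$ and $6$; Table iii handles $L^3U^2$, $UL^5$, $ULU^2$ at weight $7$; Table iv handles the five weight-$8$ monomials $L^3ULU$, $UL^2U^2$, $(UL^2)^2$, $UL^3UL$, $L^3UL^3$; Table v handles six weight-$9$ monomials. The key closure claim is: once one reaches weight $10$, every monomial in $L$ and $U$ either already has $x$-degree $\geq 7$ concentrated so that $p(x)=0$ applies, or it contains a factor matching the left-hand side of a row in Tables i--v after peeling off $L$'s from the left, and each application strictly decreases a well-chosen monomial order while keeping the total weight fixed at $\geq 10$; since there are no surviving normal-form monomials of weight $\geq 10$ (because $L^{10}$ and all its relatives reduce via $UL^5$, $L^3U^2$, etc. pushed up by left-multiplication by powers of $L$ and by the weight-$9$ rules of Table v, whose right-hand sides already have a factor of the form forcing $x$-degree $\geq 7$ or weight-$\geq 10$ collapse), the induction terminates at $0$. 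Concretely, multiplying the last row of Table v ($L^5UL^2$, weight $9$) by $L$ from the left, or $L^6UL$ by $L$, gives weight-$10$ identities whose right-hand sides are multiples of $L^7U$ and $L^9$; and then the weight-$10$ pure-power relation $L^{10}=0$ follows because $L^7U$ and $L^9$ must themselves be re-expressed — here one uses that any weight-$\geq 10$ monomial with at most one $U$ corresponds to a monomial $p(x)$ of $x$-degree $\geq 10 \geq 7$, hence is $0$ by Lemma~\ref{Le3}. Finally, $L_a^{10}=0$ is the special case $U\mapsto$ irrelevant, i.e. the statement that the weight-$10$ monomial $L^{10}$ is zero.

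The main obstacle I expect is \emph{not} the conceptual structure but the bookkeeping: one must be certain that the finitely many reduction rules in Tables i--v, when propagated to weight $10$ by left-multiplication by $L$ and by substituting one into another, actually close up — i.e. that no weight-$10$ normal-form monomial escapes. This is exactly the point the author delegates to a computer algebra computation ("The author used a MAPLE language program to discover these identities"), and a human proof must either (a) reproduce enough of that computation to exhibit, for each of the finitely many weight-$10$ monomials in $L$ and $U$ with $\leq 1$ factor of $U$ not already covered by $p(x)=0$, an explicit reduction to $0$, or (b) set up a confluent rewriting system (a noncommutative Gröbner basis in the free associative algebra on $L, U$ modulo the relations of Lemma~\ref{Le3}) and invoke termination plus the absence of weight-$\geq 10$ irreducible monomials. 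I would take route (b) in spirit but present route (a) in practice: list the weight-$10$ monomials, note that those with $\geq 2$ $U$'s fall to Tables i--v by peeling $L$'s, those with exactly one $U$ in positions making $x$-degree $\geq 7$ fall to $p(x)=0$, and the remaining handful ($L^{10}$, $L^8U$ reinterpreted, $L^9$ wait—$L^9$ is weight $9$) reduce via the pushed-up weight-$8$ and weight-$9$ rules; since every right-hand side then lies in the span of already-killed monomials, all weight-$\geq 10$ monomials vanish, and in particular every monomial in $P$ of $x$-degree $\geq 10$ and $y$-degree $1$ — being, after collecting the $x$-factors, an operator word in $L_a, L_{a^2}, \dots$ hence in $L$ and $U$ of weight $\geq 9$ applied to $y$, and actually of weight $\geq 10$ after accounting for the outermost multiplication — is an identity, giving $L_a^{10}=0$ as the pure case.
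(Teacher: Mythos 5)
Your overall framework --- reduce every weight-$10$ operator word in $L,U$ to a short list of irreducible monomials by pushing the rules of Tables i--v up with left/right multiplications --- matches the first step of the paper's proof, which reduces the problem to showing $L^2UL^4U=0$, $L^8U=0$ and $L^{10}=0$. But your argument for killing the surviving monomials contains a genuine error. You claim that ``any weight-$\geq 10$ monomial with at most one $U$ corresponds to a monomial $p(x)$ of $x$-degree $\geq 10\geq 7$, hence is $0$ by Lemma~\ref{Le3}.'' This conflates two different things. The clause ``$p(x)=0$ for every monomial of $x$-degree $\geq 7$'' in Lemma~\ref{Le3} concerns monomials in $x$ alone ($y$-degree $0$), i.e.\ it says $\alg(a)$ is nilpotent of index $\leq 7$. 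The operator word $L^{10}$ applied to $y$ is $x(x(\cdots(xy)))$, a monomial of $y$-degree $1$, and is \emph{not} covered by that clause; likewise $L^8U(y)$. If your reasoning were valid it would give $L^7=0$ outright, which would make the theorem (and the conjecture of \cite{CHL}, which is precisely about the nilpotency index of the multiplication algebra as opposed to that of $\alg(a)$) trivial. So the two monomials that are hardest to kill are exactly the ones your argument dismisses for a wrong reason.

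What the paper actually does at this point is the real content of the proof: it evaluates several weight-$10$ words in two different ways using the rules of Tables iii--v (e.g.\ $L^6UL^2$ as $(L^6UL)L$ and as $L(L^5UL^2)$; $L^5UL^3$ as $(L^5UL^2)L$ and as $L^2(L^3UL^3)$; $L^3UL^3U$ as $L(L^2UL^3U)$ and as $(L^3UL^3)U$), obtaining three linear relations
$27L^8U+170L^{10}=0$, $141L^8U+818L^{10}=0$, $17880L^8U+28685L^{10}=0$,
whose coefficient matrix has rank $2$, forcing $L^8U=L^{10}=0$; only then does $L^2UL^4U=0$ follow from Table v. This ``resolve the ambiguities and check the resulting linear system is nondegenerate'' step is exactly the confluence verification you defer to route (b), and it cannot be replaced by a degree argument: it depends on the specific numerical coefficients produced by the linearizations. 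Your proposal as written therefore has a gap precisely where the theorem stops being formal.
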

\begin{proof}
First we shall prove that every monomial of $x$-degree 10 and $y$-degree 1 is an identity in $\A$.
 Multiplying the operators in the first line of  Table v with $L$ from the left and from the right,  and  
 the operators in the first line of  Table iv with $U$ from the left and from the right and next using 
 reductions from Tables i-v we see that we only need to prove that  $L^2UL^4U =0, L^8U=0$ and 
 $L^{10}=0$  are multiplication identities in $\A$.  Now, for any $x$ in $\A$ we have
 {\small
 \begin{eqnarray*}[ll]
 L^7UL &= L(L^6UL) =  -7 L^8U- 48 L^{10},\\
 L^6UL^2&=(L^6UL)L = -7L^7UL-48L^{10} = 49 L^8U +288 L^{10},\\
 L^6UL^2&=L(L^5UL^2)= -23 L^8U -496/3L^{10}.
 \end{eqnarray*}}
 Therefore 
\begin{eqnarray}\label{eqL1}
	27 L^8U+ 170 L^{10}=0.	
\end{eqnarray}
Now, 
 {\small
 \begin{eqnarray*}[ll]
 L^5UL^3&=(L^5UL^2)L = -23L^7UL -496/3L^{10} = 161 L^8U +2816/3 L^{10},\\
 L^5UL^3&=L^2(L^3UL^3)= -3/4L^6UL^2 - 3/2 L^7UL-3/4L^8U - 8 L^{10}\\
        &=-27 L^8U -152 L^{10},
 \end{eqnarray*}}
 and hence  
\begin{eqnarray}\label{eqL2}
	141 L^8U + 818 L^{10}=0.
\end{eqnarray}
Next      
  {\small
 \begin{eqnarray*}[ll]      
 L^3UL^3U&=L(L^2UL^3U)= 29/3L^8U +422/9L^{10},\\
 L^3UL^3U&=(L^3UL^3)U=-3/4 L^4UL^2U-3/2 L^5ULU -3/4L^6UU - 8L^8U\\
        &=-3/4 L(L^3UL^2U) -3/2L^2(L^3ULU)-3/4 L^3(L^3U^2)-8L^{10}\\
        &= 9/4L^6UL^2 +15/4L^7UL  +667/4 L^8U  +2345/2L^{10}\\
        &= 1003 L^8U  +3281/2 L^{10},  
 \end{eqnarray*}}
 so that 
\begin{eqnarray}\label{eqL3}
	17880 L^8U + 28685 L^{10}= 0.
\end{eqnarray}
Combining (\ref{eqL1}-\ref{eqL3}) we obtain that $L^8U=0$ and $L^{10}=0$. Now, we have by Table v that 
$0 = (L^2UL^3U)L = L^2(UL^3UL) = -L^2UL^4U - L^3UL^2UL - L^3UL^3U - 4 L^4UL^4- 11/2 L^6UL^2
-3L^7UL +9/2L^8U = -L^2UL^4U - (L^3UL^2U)L - 4L(L^3UL^3)L = -L^2UL^4U$. Therefore, we have $L^2UL^4U=0$.

In an analogous way, we can see that every monomial of $x$-degree 11 and $y$-degree 1 is an identity in $\A$.
This proves the theorem.
\end{proof}

Now we shall investigate two subvarieties of $\mathcal{V}$. We start  in Subsection 2.1 with the class of all nilalgebras in $\mathcal{V}$ of
index $\leq 5$ and next   in Subsection 2.2 we study  the multiplication identities of the variety 
of all the nilalgebras  in $\mathcal{V}$ of index $\leq 6$.

 \subsection{The identity x((xx)(xx))=0} We will now  consider 
the class  of all algebras in $\mathcal{V}$  satisfying  the identity $x(x²x²)=0$.  
 First,  linearization $\delta[y]\{x(x^2)^2\}$ implies 
  \begin{equation}\label{eq45.1}
  L_{x^2x^2} = -4 LUL,
 \end{equation}
 and identity $\delta[y]\{x^2x^3\}=0$ forces 
 \begin{equation}\label{eq45.2}
 UU = -2 ULL + 2 LUL + 4 L^4.
 \end{equation}
 Next, using above identity and $\delta[y,x^2]\{x(x^2)^2\}=0$ we get that
 $0 = 4UUL+4LUU+8LL_{x^3}L = 4(UUL+LUU-2LLUL-4L^5)=
 8(-UL^3+LULL+2L^5 -LULL +LLUL+ 2L^5- LLUL - 2L^5) = 8(-UL^3+2L^5)$. Hence
    $UL^3 = 2 L^5$.
Now idnetity $L_{x(x^2x^2)} = 0$ and relations (\ref{eql5}) and (\ref{eq45.2}) imply 
   $L^2UL = -L^3U - 4 L^5$. Thus, we have the following multiplication identities
   \begin{center}{\footnotesize
{\rm Table vi: Multiplication identities of degree 5}\\
\begin{array}{l|rrrr}\hline
&ULU&LUL^2&L^3U&L^5\\
\hline
UUL  & 0 & 2 & 0 & 0\\
LUU  & 0 &-2 &-2 &-4\\ 
L^2UL& 0 & 0 &-1 &-4\\
UL^3 & 0 & 0 & 0 & 2\\ \hline
\label{eq5.0}\label{eq45.3}
\end{array}
}
\end{center}
   

  From Table ii, we can prove that 
  \begin{equation}\label{eql5.1}
  (UL)^2 = - UL^2U - (LU)^2 + 2 L^3UL + 4 L^4U + 16 L^6,
 \end{equation}
 and $\delta[x^2]\{x^2(x(x(xy)))-2 x(x(x(x(xy))))\} =0$  forces
   \begin{equation}\label{eql5.2}
  (UL)^2 + UL^2U + 2 L^3UL + 4  L^6 = 0.
 \end{equation}
 Combining (\ref{eql5.1}) and (\ref{eql5.2}), we have
 $(LU)^2 = 4  L^6$ and $(UL)^2 = -UL^2U + 2 L^4U + 4 L^6$.
Now, we can check easily the following multiplication identities
  \begin{center}{\footnotesize
{\rm Table vii: Multiplication identities of degree 6}\\
\begin{array}{l|rrr}\hline
&ULLU&L^4U&L^6\\
\hline
UUU    &-2 & 4 & 8\\
UULL   & 0 & 0 & 4\\ 
ULUL   &-1 & 2 & 4\\
LUUL   & 0 & 2 & 0\\
LULU   & 0 & 0 & 4\\ \hline
\end{array}\qquad \qquad
\begin{array}{l|rrr}\hline
&ULLU&L^4U&L^6\\
\hline
LLUU   & 0 &-4 &-4\\
UL^4   & 0 & 0 & 2\\
LUL^3  & 0 & 0 & 2\\
L^2UL^2& 0 & 1 & 0\\
L^3UL  & 0 &-1 &-4\\ \hline
\end{array}
}
\end{center}

 \begin{Th}\label{th45.1} Let $\A$ be an algebra over a field $F$ of characteristic
  $\ne 2$ or $3$, satisfying the identities $x^4 = 0$ and $x(x^2x^2) = 0$. Then
   every monomial in $P$ of
  $x$-degree $\geq 7$ and $y$-degree 1 is an identity in $\A$. In particular,
$L_a^{7} = 0$ for  all $a\in \A$. Furthermore, 
 the algebra generated by $L_x$ and $L_{x^2}$ is spanned, as vector space, by \\
  $L,U,L^2,UL,LU,L^3,UL^2,LUL,L^2U,L^4,ULU,LUL^2,L^3U,L^5,UL^2U,L^4U,L^6$.
\end{Th}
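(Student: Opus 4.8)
The plan is to derive the entire statement from one assertion about the multiplication algebra: \emph{every word in $L=L_x$ and $U=L_{x^2}$ of total degree at least $7$ is the zero operator}, where $L$ has degree $1$ and $U$ has degree $2$. This suffices. Since $\A_a$ is generated by $L_a$ and $L_{a^2}$ and, by (\ref{eql3}-\ref{eql6}), (\ref{eq45.1}) and (\ref{eq45.2}), every $L_w$ with $w$ a monomial in $x$ of degree $n$ is a linear combination of degree-$n$ words in $L$ and $U$, a monomial of $P$ of $x$-degree $n$ and $y$-degree $1$ --- being of the form $L_{w_1}\cdots L_{w_k}(y)$ with $\deg w_1+\cdots+\deg w_k=n$ --- equals a linear combination of degree-$n$ words in $L,U$ applied to $y$, hence is an identity once $n\ge 7$. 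For the spanning claim one checks that the degree-$\le 6$ part of the algebra generated by $L$ and $U$ is already spanned by the seventeen listed operators: $U^2$ is removed in degree $4$ by (\ref{eq45.2}), the four remaining degree-$5$ words by Table vi, and the ten remaining degree-$6$ words by Table vii; so once the degree-$\ge 7$ part vanishes, those seventeen span everything.

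For the assertion I would settle degree $7$ first. A word of degree $7$ ends in $L$ or in $U$; deleting the last letter leaves a word of degree $6$ or $5$, which Table vii, respectively Table vi, reduces to $\{UL^2U,L^4U,L^6\}$, respectively $\{ULU,LUL^2,L^3U,L^5\}$. Hence the degree-$7$ part lies in the span of $UL^2UL,\ L^4UL,\ L^7,\ ULU^2,\ LUL^2U,\ L^3U^2,\ L^5U$; now each of $UL^2UL$, $L^4UL$, $ULU^2$, $L^3U^2$ contains a subword reducible by Table vi or Table vii (namely $L^2UL$, $L^3UL$, $U^2$, $L^2U^2$), and a second round of these rewritings turns each into a linear combination of $L^5U$, $L^7$, $LUL^2U$ --- so the degree-$7$ part is spanned by just these three operators. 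Table iii --- valid here, as the characteristic is $\ne 2,3$ --- provides three identities among degree-$7$ operators; rewriting each of them with Tables vi and vii produces three linear relations among $L^7$, $L^5U$, $LUL^2U$, and a short check shows this $3\times 3$ system has full rank, so $L^7=L^5U=LUL^2U=0$ and every word of degree $7$ vanishes. (If one of the three relations should turn out to be degenerate, it is replaced by a linearization of $x(x^2x^2)=0$ of $x$-degree $7$, for instance $\delta[y,x^2,x^2,x^2]\{x(x^2x^2)\}=0$.) This bookkeeping of reductions --- best run by a computer-algebra program, as the author notes for the analogous Tables iii--v --- is the only real difficulty; everything around it is formal.

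Finally I would pass from degree $7$ to all higher degrees. A degree-$8$ word ending in $L$ equals a degree-$7$ word times $L$, hence $0$; a degree-$8$ word ending in $U$ equals a degree-$6$ word times $U$, and, reducing the degree-$6$ prefix by Table vii and then applying Table vii once more to the degree-$6$ subwords $L^2U^2$, $UL^4$ that arise, it is exhibited as $L^k$ times a degree-$7$ word, hence $0$. For $n\ge 9$, the partial sums of the letter-degrees along a word increase by $1$ or $2$ at each step, so some initial segment has degree exactly $7$ or exactly $8$ and is therefore $0$, whence the whole word is $0$. This proves that every monomial of $x$-degree $\ge 7$ and $y$-degree $1$ is an identity in $\A$ --- in particular $L_a^7=L^7=0$ for all $a\in\A$ --- and, with the degree-$\le 6$ reductions recalled above, that the seventeen operators listed span the algebra generated by $L_x$ and $L_{x^2}$.
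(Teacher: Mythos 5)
Your proposal is correct, and its overall architecture coincides with the paper's: reduce every word of degree $\ge 7$ in $L,U$ (with $\deg L=1$, $\deg U=2$) to the three words $LUL^2U$, $L^5U$, $L^7$ via Tables vi--vii, kill those three, and propagate to higher degrees by the prefix argument. Where you genuinely diverge is in how the three target words are killed. The paper does not touch Table iii at all: it takes one fresh linearization, $0=\delta[y,x^2x^2]\{x(x^2)^2\}=-32(LU)^2L=-2^7L^7$, to get $L^7=0$ in a single stroke, then reads off $LUL^2U=2L^5U$ from $0=L(UL)^2$ and gets $L^5U=0$ from $(L^2UL+L^3U+4L^5)L^2=0$. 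You instead recycle the three rows of Table iii (legitimately, since the subvariety sits inside $\mathcal{V}$) and reduce them modulo Tables vi--vii. The one step you defer --- ``a short check shows this $3\times3$ system has full rank'' --- does hold: the three rows reduce to $L^5U+6L^7=0$, $L^5U+2L^7=0$ and $LUL^2U-L^5U+2L^7=0$, a system of determinant $-4$ in the unknowns $(LUL^2U,\,L^5U,\,L^7)$, so all three vanish in characteristic $\ne 2$. Your route buys uniformity (no new linearization is needed; everything is mechanical rewriting of already-tabulated identities) at the cost of a heavier computation; the paper's choice of $\delta[y,x^2x^2]\{x(x^2)^2\}$ is shorter and makes the vanishing of $L^7$ transparent. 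Your treatment of degrees $\ge 8$ and of the spanning claim (which the paper leaves implicit) is careful and correct; in particular the observation that the partial degree sums along a word must hit $7$ or $8$ cleanly disposes of all higher degrees.
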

\begin{proof} 
We shall prove that every monomial of $x$-degree $\geq 7$ and $y$-degree 1 is an identity in $\A$.
 Multiplying the operators in the first line of  Table vii with $L$ and $U$ from the left and from the right, and  
 the operators in the first line of  Table vi with $U$ from the left and from the right, and next using 
 reductions from Tables i-vii we see that we only need to prove that  $LUL^2U =0, L^5U=0$ and 
 $L^{7}=0$  are multiplication identities in $\A$.  Now, we have 
 $0 = \delta[y,x^2x^2]\{x(x^2)^2\} = 
4L_{x^2x^2}UL + 4LUL_{x^2x^2}= -16 LULUL - 16 LULUL = -32 LULUL =-32 (LU)^2L 
= -2^7 L^7$, so that $L^7=0$. Also $0 = LULUL = L(UL)^2 = - LUL^2U + 2 L^5U$. Therefore,
$LUL^2U = 2 L^5U.$
Finally, from Table vi we have  that 
$0 = (L^2UL + L^3U+ 4L^5)L^2 = L^2UL^3+ L^3UL^2 = L^3UL^2 = L(L^2UL^2) =
 L^5U$. This proves the theorem.
\end{proof}

\subsection{The identity x(x((xx)(xx)))=0} In this subsection we 
consider the class  of all algebras in $\mathcal{V}$  satisfying  the identity $x(x(x²x²))$ $=0$.  
Because  we use linearization process of identities  and $x(x(x^2x^2))$ has degree 6, we  need 
consider the field $F$ of characteristic not 5 (2 or 3.)

From linearization $\delta[y]\{x(x(x²x²))\}$, we get the  multiplication identity $L_{x(x^2x^2)}+ LL_{x^2x^2}+4L^2UL=0$ and now Lemma \ref{Le3} forces 
\begin{equation}
LUU = - 2 LUL^2 -2 L^3U - 4 L^5.
\end{equation}
The relation $0=\delta[y,x^2]\{x(x(x^2x^2))\}= UL_{x^2x^2}+ 4LL^{x^2x^3}L+ 4 ULUL + 4LUUL+ 8L^2L_{x^3}L + 4L^2UU$ implies 
\begin{equation}
LUL^3 = -\frac{1}{2} \big( L^2UL^2 + L^3UL\big),
\end{equation}
since we can use identities from  Tables i-v. Next, by 
$0=\delta[y,x^3]\{x(x(x^2x^2))\}$ and 
$0=\delta[y,x^2,x^2]\{x(x(x^2x^2))\}$ we get 
\begin{eqnarray}[lcl]
L^4UL &= & -3 L^5U - 16 L^7,\label{eq46.3}\\
 L^2ULU & = &  - L^3UL^2 + 5 L^5U + 28 L^7, \label{eql2ulu}
\end{eqnarray}
and identities $0=\delta[y,x^2,x²,x²]\{x(x(x^2x^2))\}$ and 
$0=\delta[y,x^2,x^3]\{x(x(x^2x^2))\}$ imply
\begin{eqnarray}[lcl]
UL^4U & = & -\frac{1}{2} L^2UL^2U + 24 L^6U + 62 L^8,\\
L^2UL^2U & = & 48 L^6U + 156 L^8.
\end{eqnarray}
Now, identity $0= \delta[y,x²x^2]\{x³x^3\}$ forces 
\begin{equation}\label{eq46.6}
L^6U = - 2 L^8.
\end{equation}

\begin{Th} Let $\A$ be a commutative algebra over a field $F$ of characteristic
  not $ 2,3$ or $5$ , satisfying the identities $x^4 = 0$ and $x(x(x^2x^2)) = 0$. Then every monomial in $P$ of
  $x$-degree $\geq 9$ and $y$-degree 1 is an identity in $\A$. In particular,
$L_a^{9} = 0$ for  all $a\in \A$. 
\end{Th}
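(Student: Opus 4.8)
The plan is to follow the same strategy used in the proof of Theorem~\ref{th45.1}, reducing everything to a small number of key multiplication identities. Since every algebra in $\mathcal{V}$ is a nilalgebra of index $\leq 7$, every monomial in $P$ of $x$-degree $\geq 7$ and $y$-degree $0$ already vanishes; so the content is only about monomials with $y$-degree exactly $1$, i.e.\ about multiplication operators. The associative algebra $\A_a$ generated by $L=L_x$ and $U=L_{x^2}$ is spanned, modulo the reductions from Tables i--v together with the new relations displayed in this subsection, by a finite list of words; we must show every word of length (i.e.\ total $x$-degree) $\geq 9$ is zero. As in the earlier proofs, multiplying the spanning words of $x$-degree $8$ by $L$ and by $U$ on the left and on the right and reducing, one checks that it suffices to prove that a short list of words of $x$-degree $9$ — I expect something like $L^5UL^2U$ (or a $y$-degree-$1$ word of that shape), $L^7U$, and $L^9$ — are multiplication identities in $\A$, everything else reducing to combinations of these via Tables i--v and equations $(15)$--$(21)$ of this subsection.

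First I would assemble the degree-$7$ and degree-$8$ consequences. From $(20)$ and $(21)$, namely $UL^4U = -\tfrac12 L^2UL^2U + 24L^6U + 62L^8$ and $L^2UL^2U = 48L^6U + 156L^8$, and from $(22)$, $L^6U = -2L^8$, one gets numerical values: $L^2UL^2U$ and $UL^4U$ become scalar multiples of $L^8$. Next, using $(18)$, $L^4UL = -3L^5U - 16L^7$, one multiplies on the left by $L$ and by $L^2$ to produce $L^5UL$, $L^6UL$, $L^6UL^2$, $L^7UL$ in terms of $L^{k}U$ and $L^{k}$; and $(19)$, $L^2ULU = -L^3UL^2 + 5L^5U + 28L^7$, multiplied by powers of $L$ gives the words $L^kULU$ for $k\geq 2$. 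The strategy mirrors equations $(11)$--$(13)$ in Theorem~\ref{Th2}: compute a given degree-$9$ word two different ways (associating the reductions differently), obtaining several linear relations among $L^7U$ and $L^9$, then solve. Concretely I would compute $L^6UL$ via $L(L^5UL)$ and via $(L^4UL)L^2$-type manipulations, and $L^5UL^2U$ via $L(L^4UL^2U)$ and via $(L^2UL^2U)$ multiplied appropriately, each evaluation yielding an equation of the form $\alpha L^7U + \beta L^9 = 0$; with two or three independent such equations the determinant is nonzero and forces $L^7U = 0$, $L^9 = 0$.

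Once $L^7U = L^9 = 0$ is established, the remaining degree-$9$ words — in particular $L^2UL^3U$ and $L^5UL^2U$ and their relatives — collapse: e.g.\ from $L^2UL^2U = 48L^6U + 156L^8 = -96L^8 + 156L^8 = 60L^8$, multiplying by $L$ gives $L^3UL^2U = 60L^9 = 0$, and a short computation analogous to the last paragraph of the proof of Theorem~\ref{Th2} (associating $(L^2UL^3U)L$ two ways, or using $UL^3UL$) kills $L^2UL^4U$ and hence all length-$9$ words containing two $U$'s. This shows every monomial of $x$-degree $9$ and $y$-degree $1$ is an identity, and in particular $L_a^9 = L^9 = 0$. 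The passage from degree $9$ to all degrees $\geq 9$ is then routine: any longer word, after one reduction step, is a left- or right-multiple (by $L$ or $U$) of a degree-$9$ word, hence zero.

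The main obstacle I anticipate is purely computational bookkeeping: keeping the list of degree-$8$ spanning words correct and making sure the reductions from Tables i--v are applied consistently (the paper itself notes a MAPLE program was used for the analogous degree-$7$, $8$, $9$ tables). The conceptual content is thin — the only genuine inputs are the six identities $(15)$--$(22)$ derived earlier in the subsection plus Lemma~\ref{Le3} — but verifying that the linear system in $L^7U$ and $L^9$ is nondegenerate (so that both vanish) is where an arithmetic slip would be fatal, exactly as the coefficients $27, 170, 141, 818, 17880, 28685$ had to line up in Theorem~\ref{Th2}.
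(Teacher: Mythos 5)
Your proposal follows essentially the same route as the paper: reduce via Tables i--v to a short list of residual words (the paper's list is $LUL^4U$, $L^2UL^2UL$, $L^7U$, $L^9$), evaluate a degree-$9$ word such as $L^6UL$ in two ways using (\ref{eq46.3}) and (\ref{eq46.6}) to get a nondegenerate linear system forcing $L^9=0$ and $L^7U=0$, and then collapse the remaining two-$U$ words by left-multiplying the degree-$8$ relations by $L$. The paper's computation is exactly the instance of your plan with $L^7U=-2L^9$ and $2L^9=-L^2(L^4UL)=10L^9$, so the approaches coincide.
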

\begin{proof} By Tables i-v, we only need to prove that $LUL^4U =0$, $L^2UL^2UL = 0$, $L^7U=0$ and $L^9=0$ are
 multiplication identities in $\A$. From (\ref{eq46.3}-\ref{eq46.6}) may be deduced immediately 
$L^7U =-2L^9$ and  $2L^9 =2L^8 L  = -L^6UL = -L^2(L^4UL) = 3L^7U + 16L^9 = - 6L^9 + 16L^9 = 10 L^9$. Therefore $L^9 = 0$ and $L^7U=0$ are identities in $\A$. Now $L^2UL^2UL= (L^2UL^2U)L= 48L^6UL+ 156L^9 = 0$ and $LUL^4U =L(UL^4U) = -(1/2)L^3UL^2U+ 24L^7U + 62L^9 = -(1/2) L(L^2UL^2U) = -24 L^7U - 78 L^9 = 0$. This proves the theorem.
\end{proof}

\vspace{.5cm}

\noindent \bf Juan C. Gutierrez Fernandez \rm\\
\noindent Departamento de Matemática-IME,\\
\noindent  Universidade de São Paulo\\
\noindent  Caixa Postal 66281,\\
\noindent   CEP 05315-970,\\ São Paulo, SP, \\ Brasil\\
\noindent e-mail: jcgf@ime.usp.br
\end{document}